\def\OL{\mathbb{O}} 
\def\OLb{\OL_b} 
\def\AM{\mathbb{M}} 
\title[]{Mimicking and Conditional Control with Hard Killing}
\author{Rene Carmonar}
\address{Department of Operations Research and Financial Engineering, Princeton University, Princeton NJ 08544}
\email{rcarmona@princeton.ed}
\author{Daniel Lacker}
\address{Department of Industrial Engineering and Operations Research, Columbia University}
\email{dlacker@gmail.com}
\thanks{R.C. was partially supported by AFOSR grant FA9550-23-1-0324. D.L. acknowledges support from an Alfred P. Sloan Fellowship and the NSF CAREER award DMS-204532}
\address{}
\date{}
\begin{document}

\begin{abstract}
We first prove a mimicking theorem (also known as a Markovian projection theorem) for the marginal distributions of an It\^o process conditioned to not have exited a given domain. We then apply this new result to the proof of a conjecture of P.L. Lions for the optimal control of conditioned processes.
\end{abstract}

\maketitle

\textbf{Key words.} Mimicking theorem, Markov Projections,  Control of Conditional Processes

 \textbf{AMS subject classification.} 60H10, 60J60, 93E20
\section{\textbf{Introduction}}

The motivation of this paper is to answer a question raised by P.L. Lions in his lectures at the Coll\`ege de France in November 2016. See \cite{Lions2016CDF}. There, he proposed to study the optimal control of an It\^o process when the objective function to minimize is computed as the integral over time of conditional expectations of a cost function, the conditional expectation at time $t$ requiring that the process did not exit a given domain before that time. As originally stated, the problem does not fit in the usual categories of stochastic control problems considered in the literature, so its solution requires new ideas, if not new technology. In his lectures, Lions emphasized the strong dependence of the running cost upon the past, and in so doing, raised the question of the possible differences between the value functions resulting from optimization over the class of Markovian controls as opposed to the general family of open loop controls merely assumed to be adapted. The equality of these value functions is expected to hold in the classical theory of stochastic control, as it can be proved under quite general conditions. However, in the present context, the values of the objective function depend strongly upon the past history of the controlled trajectories, raising the specter of possible differences between the results of the optimization over these two classes of control processes. The goal of this paper is to give a short proof that these two values are the same under the conditions originally chosen by P.L. Lions. Our approach is to prove a specific form of the mimicking theorem for killed processes, which we then use to argue that any open-loop control can be replaced by a Markovian one while improving the cost. This idea of using the mimicking theorem to  prove equality of the open-loop and Markovian values seems to have first appeared in \cite{lacker2015mean}, with earlier work such as \cite{nicole1987compactification} using Krylov's selection argument.It was a crucial component of the analysis of the soft killing case in 
\cite{CarmonaLauriereLions}.

In no small part due to the technicality of the original presentation in \cite{Lions2016CDF}, very few papers have attempted to study this class of control problems. The only works we know of are \cite{AchdouLauriereLions}
which treats numerically the case of Markovian feedback controls for large time horizon and large domains,
and \cite{CarmonaLauriereLions} where the problem is completely analyzed in the case of soft killing, as opposed to the hard killing originally suggested in \cite{Lions2016CDF} and treated in this paper. The paper \cite{NutzZhang} studies a related problem of optimal stopping.

\section{\textbf{The Original Conditional Exit Control Problem}}
\label{sec:conditional_exit}

We recall the formulation of the conditional control problem originally proposed by P.L. Lions in his lectures at the Coll\`ege de France in 2016. See \cite{CarmonaLauriereLions} for details and a complete analysis of the soft killing version of the model. We here set up the control problem in both open-loop and Markovian forms.

Throughout the paper, we assume that  $D$ is a bounded nonempty open domain in $\RR^d$ with a smooth boundary $\partial D$. 
We are given a time horizon $T > 0$ and two continuous cost functions, $f : \overline{D} \times \RR^d \to \RR$ and $g: \overline{D} \to \RR$. We assume that $a \mapsto f(x,a)$ is convex for each $x$, and that $f$ and $g$ satisfy the growth conditions
\begin{equation}
|f(x,a)| + |g(x)| \le c(1+|x|^2 + |a|^2), \label{asmp:growth}
\end{equation}
for some constant $c$. The particular case $f(x,a)=\frac12|a|^2 + \tilde f(x)$ for a bounded continuous function $\tilde f$ was consider in the original formulation of the problem in \cite{Lions2016CDF}.
We work with a filtered probability space $(\Omega,\cF,\FF=(\cF_t)_{t \in [0,T]},\PP)$ supporting a $d$-dimensional $\FF$-Brownian motion $\bW=(W_t)_{t \in [0,T]}$ and an $\cF_0$-measurable random variable $\xi$ taking values in $D$.

To fix notation, we denote by $C([0,\infty);\RR^d)$ the space of continuous functions of time $t\in[0,\infty)$ with values in $\RR^d$, and by $C_0([0,\infty);\RR^d)$ the subspace of those functions $x\in C([0,\infty);\RR^d)$ satisfying $x(0)=0$. If $x\in C([0,\infty);\RR^d)$, we denote by $\tau(x)$ the first exit time of the path $x$ from $D$, namely the quantity:
\begin{equation*}
    \tau(x)=\inf\{t\ge 0;\,x(t)\notin D\}
\end{equation*}
with the convention that $\inf \emptyset = \infty$. 
Also, whenever $X$ is a random variable, we denote by $\cL(X)$ its distribution and by $\cL(X|A)$ its conditional distribution given the event $A$.

The open-loop problem is defined as follows. 
Let $\OL$ denote the set of progressively measurable processes $\balpha=(\alpha_t)_{0\le t\le T}$  satisfying
\begin{equation*}
\EE\int_0^T|\alpha_t|^2dt<\infty.
\end{equation*}
This is the set of \emph{open-loop controls}. Let $\OLb$ denote the subset of controls which are a.e.\ a.s.\ bounded.
For $\balpha \in \OL$ we define the corresponding controlled state process $\bX=(X_t)_{t \in [0,T]}$ by
\begin{equation*}
dX_t=\alpha_t dt + \sigma dW_t, \quad X_0=\xi.
\end{equation*}
The corresponding cost $J(\balpha)$ is then defined by
\begin{equation*}
\begin{split}
J(\balpha)&=\int_0^T\EE[f(X_t,\alpha_t)|\tau(X)> t]\;dt +\EE[g(X_T)|\tau(X)> T] \\
&=\int_0^T
\frac{\EE\Bigl[f(X_t,\alpha_t)\textbf{1}_{\{\tau(X)>t\}}\Bigr]}{\PP[\tau(X)> t]}dt
+\frac{\EE\Bigl[g(X_T)\textbf{1}_{\{\tau(X)>T\}}\Bigr]}{\PP[\tau(X)> T]}.
\end{split}
\end{equation*}
If $\PP(\tau(X)> T)=0$, we adopt the convention that $J(\alpha)=\infty$.

The Markovian problem  is defined as follows.
Let $\AM$ denote the set of Borel measurable functions $\phi : [0,T] \times \RR^d\to \RR^d$ such that the SDE
 \begin{equation*}
     dX_t = 1_D(X_t)\phi(t,X_t)dt + 1_D(X_t)dW_t, \quad X_0=\xi,
 \end{equation*}
 admits a strong solution satisfying
 \begin{equation*}
     \EE\int_0^T |\phi(t,X_t)|^2\,dt < \infty.
 \end{equation*}
 This is the set of \emph{Markovian controls}. The corresponding cost is defined by
 \begin{equation*} 
     J(\phi) =\int_0^T\EE[f(X_t,\phi(t,X_t))|\tau(X)> t]\;dt +\EE[g(X_T)|\tau(X)> T].
 \end{equation*}
 We again adopt the convention that $J(\phi)=\infty$ if $\PP(\tau(X)> T)=0$.
 
\vskip 6pt
Our main result is the following:

\begin{theorem} \label{th:main}
The value over open-loop and Markovian controls is the same: 
\begin{equation*}
     \inf_{\alpha \in \AM}J(\phi) = \inf_{\alpha \in \OL}J(\balpha).
 \end{equation*}
\end{theorem}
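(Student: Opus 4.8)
The plan is to establish the two inequalities separately. The inequality $\inf_{\phi\in\AM}J(\phi)\ge\inf_{\balpha\in\OL}J(\balpha)$ is the routine direction: given $\phi\in\AM$ with associated killed state process $\bX$, set $\alpha_t:=\phi(t,X_t)$; this process is progressively measurable and, by the very definition of $\AM$, satisfies $\EE\int_0^T|\alpha_t|^2\,dt<\infty$, so $\balpha\in\OL$. Since $\bX$ and the open-loop state process driven by $\balpha$ coincide up to the exit time $\tau(\bX)$ (before which the factors $\mathbf 1_D(X_t)$ equal $1$), and both cost functionals depend on the trajectory only through the conditioning on $\{\tau>t\}$, one gets $J(\balpha)=J(\phi)$, and the inequality follows. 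Everything therefore reduces to showing $\inf_{\phi\in\AM}J(\phi)\le\inf_{\balpha\in\OL}J(\balpha)$, i.e.\ that every open-loop control can be replaced by a Markovian one at no greater cost.

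Fix $\balpha\in\OL$ with state process $\bX$; we may assume $\PP(\tau(\bX)>T)>0$, since otherwise $J(\balpha)=\infty$. A standard truncation reduces matters to $\balpha\in\OLb$, so take $\balpha$ bounded. Introduce the sub-probability flow $m_t(A):=\PP(X_t\in A,\ \tau(\bX)>t)$ for Borel $A\subseteq D$ and the projected drift
\begin{equation*}
\phi(t,x):=\EE[\alpha_t\mid X_t=x,\ \tau(\bX)>t],
\end{equation*}
made rigorous by disintegrating $\EE[\alpha_t\mathbf 1_{\{\tau(\bX)>t\}}\mid X_t=x]$ against $m_t$; then $\phi$ is jointly measurable in $(t,x)$ and bounded by $\|\balpha\|_\infty$. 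Applying It\^o's formula to $\psi(X_{t\wedge\tau(\bX)})$ for $\psi\in C_c^\infty(D)$ (so that $\psi$ vanishes near $\partial D\ni X_{\tau(\bX)}$, hence $\psi(X_{t\wedge\tau(\bX)})=\psi(X_t)\mathbf 1_{\{\tau(\bX)>t\}}$) and taking expectations, the martingale term drops and the projection property of $\phi$ yields that $(m_t)_{t\in[0,T]}$ is a weak solution on $(0,T)\times D$ of the Fokker--Planck equation $\partial_t m=-\mathrm{div}(\phi\,m)+\tfrac12\sum_{ij}\partial_{ij}\big((\sigma\sigma^\top)_{ij}\,m\big)$ with absorbing (Dirichlet) boundary condition and initial datum $\cL(\xi)$.

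The heart of the matter is the mimicking (Markovian projection) theorem for killed processes, which I would invoke here to reconstruct from $(m_t)$ a solution $\widehat{\bX}$ of the killed SDE
\begin{equation*}
d\widehat X_t=\mathbf 1_D(\widehat X_t)\,\phi(t,\widehat X_t)\,dt+\mathbf 1_D(\widehat X_t)\,\sigma\,dW_t,\qquad \widehat X_0=\xi,
\end{equation*}
whose own sub-probability flow coincides with $(m_t)$: $\PP(\widehat X_t\in A,\ \tau(\widehat{\bX})>t)=m_t(A)$ for every $t\in[0,T]$ and every Borel $A\subseteq D$. The mechanism is a superposition principle, adapted to the absorbing boundary, building a process out of the Fokker--Planck solution; carrying out the regularity bookkeeping then certifies $\phi\in\AM$. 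The hard part will be precisely this step: in contrast with the classical Gy\"ongy--Brunick--Shreve framework, the coefficients here degenerate at $\partial D$, the relevant marginals are those of a process conditioned to survive, and one must push a merely measurable drift through the superposition argument while still landing in the admissible class $\AM$ (which asks for a genuine solution of the killed SDE, not a relaxed or martingale-problem one); controlling mass near the boundary and the well-posedness of the degenerate killed equation is where the real work lies.

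Finally, compare costs. From the flow identity, $\PP(\tau(\widehat{\bX})>t)=m_t(D)=\PP(\tau(\bX)>t)$, so the normalizations in the conditional expectations match; and convexity of $a\mapsto f(x,a)$ with Jensen's inequality applied conditionally on $\{X_t=x,\ \tau(\bX)>t\}$ gives
\begin{equation*}
\EE\big[f(X_t,\alpha_t)\mathbf 1_{\{\tau(\bX)>t\}}\big]\ \ge\ \int_D f\big(x,\phi(t,x)\big)\,m_t(dx)\ =\ \EE\big[f(\widehat X_t,\phi(t,\widehat X_t))\mathbf 1_{\{\tau(\widehat{\bX})>t\}}\big],
\end{equation*}
together with the corresponding equality for the terminal term in $g$. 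Dividing by the common survival probabilities and integrating in $t$ yields $J(\phi)\le J(\balpha)$. Hence $\inf_{\phi\in\AM}J(\phi)\le\inf_{\balpha\in\OL}J(\balpha)$, and combined with the easy inequality this proves the theorem.
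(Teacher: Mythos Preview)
Your overall architecture---trivial direction, reduce to bounded controls, project the drift conditionally on survival, invoke a killed-process mimicking result, then apply Jensen---is exactly the paper's. Two points deserve comment.

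First, the reduction from $\OL$ to $\OLb$ is not as ``standard'' as you suggest. Truncating $\balpha$ to $\balpha^n$ gives uniform convergence of the unstopped processes $Y^n\to Y$, but one must show $\tau(Y^n)\to\tau(Y)$ to conclude $J(\balpha^n)\to J(\balpha)$. The paper handles this via a grazing lemma (continuity of $\tau$ at paths that do not touch $\partial D$ and then stay in $\overline D$) together with the absolute continuity of $\cL(Y-X_0)$ with respect to Wiener measure, which rules out grazing almost surely. You should not skip this.

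Second, for the mimicking step the paper takes a more elementary route than your superposition/Fokker--Planck plan. It applies the \emph{classical} Brunick--Shreve theorem directly to the stopped process $Y_t=X_{t\wedge\tau(X)}$, which satisfies $dY_t=\mathbf 1_D(Y_t)\alpha_t\,dt+\mathbf 1_D(Y_t)\,dW_t$; this yields a weak solution $\tilde Y$ of the projected degenerate SDE with $\cL(\tilde Y_t)=\cL(Y_t)$. The genuinely delicate point---which you correctly anticipate---is to show that $\tilde Y$ actually stops upon hitting $\partial D$, i.e.\ $\{\tilde Y_t\in D\}=\{\tau(\tilde Y)>t\}$ a.s. The paper secures this by proving pathwise uniqueness for the degenerate SDE $dZ_t=\mathbf 1_D(Z_t)b(t,Z_t)\,dt+\mathbf 1_D(Z_t)\,dW_t$ (strong existence via Veretennikov, uniqueness in law via a time-change argument), so that $\tilde Y$ must coincide with its own stopped version. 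Your projected drift $\phi(t,x)=\EE[\alpha_t\mid X_t=x,\tau>t]$ agrees on $D$ with the paper's $\tilde\alpha(t,x)=\mathbf 1_D(x)\,\EE[\alpha_t\mid X_{t\wedge\tau}=x]$, and your Jensen step is the same. The superposition route you outline could presumably be made to work, but the paper's device of mimicking the stopped process avoids building any new theory for absorbing boundaries.
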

  
The proof is divided between the next two sections. First, Section \ref{se:mimicking} proves a mimicking theorem, which may be of interest in its own right, and Section \ref{se:mainproof} then uses it to prove Theorem  \ref{th:main}.

\section{\textbf{A Mimicking Theorem for Conditional Marginals}} \label{se:mimicking}

The goal of this section is to prove a conditional form of the classical mimicking theorem for killed It\^o processes. Conditioning at each time $t$ on the fact that the process did not exit the domain $D$ up to and including time $t$ is not covered by the classical versions of the mimicking theorem as introduced by Gy\"ongy in \cite{gyongy1986mimicking}  and generalized by Brunick and Shreve in \cite{BrunickShreve}, and it requires a special treatment.
While our proof below applies to more general It\^o processes having non-constant volatility, we restrict our presentation to the case of the volatility equal to the identity for the sake of ease of exposition. We will need the following lemma whose proof we could not find in the existing literature.

\begin{lemma}
    \label{le:well_posedness}
If $b:[0,\infty)\times\RR^d\ni (t,x)\mapsto b(t,x)\in\RR^d$ is a bounded measurable function, the stochastic differential equation (SDE)
\begin{equation}
    \label{fo:sde}
dX_t=\textbf{1}_D(X_t)b(t,X_t)dt+\textbf{1}_D(X_t)dW_t
\end{equation}
admits a pathwise unique strong solution started from any initial condition.
\end{lemma}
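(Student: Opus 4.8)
The plan is to treat \eqref{fo:sde} as the non-degenerate SDE $dX_t=b(t,X_t)\,dt+dW_t$ run inside $D$ and ``frozen'' on $\partial D$: once the path reaches the boundary both coefficients vanish, so it should stay put. Accordingly I would split the argument into (i) an existence construction by stopping a classical solution, (ii) a reduction of pathwise uniqueness to the assertion that $\partial D$ is absorbing, and (iii) the proof of that assertion, which is the only non-routine point.

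For existence, when the initial point $x_0$ lies in $D$, take the pathwise unique strong solution $Y$ of $dY_t=b(t,Y_t)\,dt+dW_t$ with $Y_0=x_0$ — it exists for bounded measurable $b$ and additive non-degenerate noise by Veretennikov's theorem — and set $X_t:=Y_{t\wedge\tau^Y}$, where $\tau^Y$ is the first exit time of $Y$ from $D$. On $\{t<\tau^Y\}$ one has $X_t=Y_t\in D$, so $\textbf{1}_D(X_t)=1$, while on $\{t\ge\tau^Y\}$ one has $X_t=Y_{\tau^Y}\in\partial D$, so $\textbf{1}_D(X_t)=0$; substituting this shows $X$ solves \eqref{fo:sde}, and $X$ is adapted. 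When $x_0\notin D$, the constant path $X\equiv x_0$ is a solution. For uniqueness, let $X^1,X^2$ be two solutions driven by the same $W$ from the same initial point. If $x_0\notin\overline D$, then since $\textbf{1}_D$ vanishes off $\overline D$ each path stays frozen at $x_0$. If $x_0\in D$, both solutions solve the non-degenerate SDE up to their first exit times from $D$, so Veretennikov's pathwise uniqueness gives $X^1\equiv X^2$ on $[0,\tau]$, where $\tau$ is the common first exit time, and $X^1_\tau=X^2_\tau=:x_\tau\in\partial D$. It then remains to prove the following absorption statement and apply it (with $\rho=\tau$, respectively $\rho=0$ when $x_0\in\partial D$) to each $X^i$: \emph{if $X$ solves \eqref{fo:sde}, $X_t\in\overline D$ for all $t$, and $\rho$ is a stopping time with $X_\rho\in\partial D$ on $\{\rho<\infty\}$, then $X_t=X_\rho$ for all $t\ge\rho$ on $\{\rho<\infty\}$.}

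Two ingredients feed into this. First, $X$ stays in $\overline D$: choosing $\psi\in C^2(\RR^d)$ with $\psi\equiv 0$ on $\overline D$ and $\psi>0$ on a one-sided neighbourhood of $\partial D$ in $\RR^d\setminus\overline D$ (take $\psi=\chi\circ r$ near $\partial D$, with $r$ the signed distance and $\chi$ smooth, vanishing on $(-\infty,0]$, positive on $(0,\infty)$, and eventually constant), Itô's formula gives $\psi(X_t)=\psi(X_0)+\int_0^t\textbf{1}_D(X_s)\nabla\psi(X_s)\cdot(b\,ds+dW_s)+\tfrac12\int_0^t\textbf{1}_D(X_s)\Delta\psi(X_s)\,ds$; but $\textbf{1}_D(X_s)\nabla\psi(X_s)=0$ and $\textbf{1}_D(X_s)\Delta\psi(X_s)=0$ pointwise (if $X_s\in D$ then $\psi\equiv 0$ near $X_s$, otherwise $\textbf{1}_D(X_s)=0$), so $\psi(X_t)\equiv\psi(X_0)=0$, hence $X_t\in\overline D$. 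Second — and this is the heart — the path cannot re-enter $D$ from $\partial D$. Fix a tubular neighbourhood of $\partial D$ on which $r$ is smooth with $|\nabla r|=1$, pick $\epsilon>0$ below its width, let $\sigma$ be the first time after $\rho$ that $X$ is $\epsilon$-deep in $D$, and set $Y_t:=-r(X_{(\rho+t)\wedge\sigma})$, so $Y_0=0$ and $0\le Y_t\le\epsilon$. Using $\textbf{1}_D(X_{\rho+s})=\textbf{1}\{Y_s>0\}$ inside the tube, Itô's formula shows that $Y$ (stopped at $\sigma-\rho$) satisfies $dY_t=\textbf{1}\{Y_t>0\}\bigl(\gamma_t\cdot dW'_t+\beta_t\,dt\bigr)$, where $W'=(W_{\rho+t}-W_\rho)_{t\ge0}$, $|\gamma_t|=1$, and $|\beta_t|\le K$ for a constant $K$ depending on $\|b\|_\infty$ and second derivatives of $r$. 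Its martingale part has quadratic variation $A_t=\int_0^t\textbf{1}\{Y_s>0\}\,ds$; by Dambis–Dubins–Schwarz it equals $B_{A_t}$ for a Brownian motion $B$, and time-changing the absolutely continuous drift as well gives $Y_{C_a}=B_a+\int_0^a\beta_{C_u}\,du$ for $a<A_\infty$, with $C$ the right-continuous inverse of $A$. Since $Y_{C_a}\ge 0$ and $|\beta_{C_u}|\le K$, this forces $B_a\ge -Ka$ for all $a<A_\infty$; but for any fixed $\delta>0$ the law of the iterated logarithm gives $B_a<-Ka$ along a sequence $a\downarrow 0$ almost surely, so $\PP(A_\infty>\delta)=0$. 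Hence $A_\infty=0$, i.e.\ $\int\textbf{1}\{Y_s>0\}\,ds=0$, which makes both the martingale and drift parts of $Y$ identically zero; thus $Y\equiv 0$, the depth $\epsilon$ is never attained (so $\sigma=\infty$), $X_{\rho+t}\in\partial D$ for all $t$, and therefore $\textbf{1}_D(X_{\rho+t})=0$ and $X$ is constant after $\rho$.

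The hard part is exactly step (iii): because the diffusion coefficient $\textbf{1}_D$ is discontinuous on $\partial D$ and vanishes there, no off-the-shelf well-posedness result applies, and one must show by hand that $\partial D$ is absorbing; the time-change reduction of the signed distance to a Brownian motion with bounded drift, combined with the law of the iterated logarithm, is the mechanism that does it. Everything else — the Veretennikov input inside $D$, the stopping construction for existence, the reduction from a general initial condition to a boundary point, and the random-initial-condition case (obtained by conditioning on $\xi$) — is routine.
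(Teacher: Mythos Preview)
Your argument is correct, and the overall architecture---existence by stopping a Veretennikov solution, uniqueness inside $D$ by Veretennikov, and the nontrivial absorption of $\partial D$---is sound. The absorption step is carefully handled: the test-function computation really does give $\psi(X_t)\equiv 0$, and the DDS/LIL argument on the signed distance correctly forces $A_\infty=0$ (one only needs $|D_{C_a}|\le K A_{C_a}=Ka$, which you have, so $B_a\ge -Ka$ on $[0,A_\infty)$, contradicting the LIL if $A_\infty>0$).

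Your route is genuinely different from the paper's. The paper does \emph{not} prove pathwise uniqueness directly: it first establishes uniqueness in \emph{law}, by (i) a Stroock--Varadhan Girsanov-type reduction to the driftless equation $dX_t=\textbf{1}_D(X_t)\,dW_t$, and (ii) an analysis of that driftless equation via the multivariate Dambis--Dubins--Schwarz time change applied to the full process $X-x_0$, together with a ``Brownian paths do not graze $\partial D$'' argument to identify $A_\infty$ with $\tau(X)$. Pathwise uniqueness is then deduced from strong existence plus uniqueness in law via Cherny's theorem. Your approach bypasses both the Girsanov reduction and the appeal to Cherny: you keep the drift, work locally with the one-dimensional signed distance rather than the $d$-dimensional process, and use the LIL at time $0$ instead of the grazing property. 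This is more self-contained and arguably more elementary, at the cost of a slightly longer hands-on computation (the Itô expansion of the signed distance and the time-changed drift bound). The paper's route, on the other hand, isolates the driftless case cleanly and makes the role of boundary smoothness very explicit through the grazing set having Wiener measure zero.
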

\begin{proof}
We first prove existence of a strong solution. By a well known result of Veretennikov \cite[Theorem 4]{veretennikov1980strong}, the SDE
\begin{align*}
dY_t = b(t,Y_t)dt + dW_t
\end{align*}
admits a pathwise unique strong solution for every initial condition. Setting $X_t = Y_{t \wedge \tau(Y)}$ produces a strong solution of the SDE \eqref{fo:sde}.

It remains to show that uniqueness in law holds for the SDE \eqref{fo:sde};  indeed, according to \cite[Theorem 3.2]{cherny2002uniqueness}, uniqueness in law plus strong existence together imply pathwise  uniqueness.
Using Theorem 6.4.3 (or 6.4.2) of \cite{StroockVaradhan} reduces our task to the proof of the uniqueness in law of the equation
\begin{equation}
    \label{fo:reduced_sde}
dX_t=\textbf{1}_D(X_t)dW_t.
\end{equation}
Let us assume that $\bX=(X_t)_{t\ge 0}$ is a weak solution of \eqref{fo:reduced_sde}.
Clearly, we can limit ourselves to initial conditions $X_0=x_0$ in $D$.  
Using the multivariate version of the Dambis-Dubins-Schwarz theorem,  see for example \cite[Theorem 18.4]{Kallenberg}, if we set $M_t=X_t-x_0$, then
$\bM=(M_t)_{t\ge 0}$ is an isotropic continuous local martingale with $M_0=0$ (isotropic means that $[M^i]=[M^j]$ and $[M^i,M^j]=0$ for all $i\ne j$), so if we define
\begin{equation}
    \label{fo:time_change}
\tau_s=\inf\{t\ge 0;\,A_t>s\} 
\quad\text{with}\quad 
A_t=\int_0^t\textbf{1}_D(X_s)ds=[M^i]_t
\quad\text{with}\quad
\cG_s=\cF_{\tau_s}
\end{equation}
then there exists a standard extension $\tilde{\cG}$ of the filtration $\cG$ and a $\tilde{\cG}$-Brownian motion $\bZ=(Z_t)_{t\ge 0}$ such that $Z=M\circ\tau$ a.s. on $[0,A_\infty)$ and $M=Z\circ A$.

\vskip 2pt
Let us denote by $\rho(x)=d(x,D)=\inf_{y\in D}\|x-y\|$ the distance from $x$ to the domain $D$, and let $\epsilon >0$ be fixed momentarily.
We define:
$$
\theta_1=\inf\{t>0;\;\rho(X_t)>\epsilon\}
\qquad\text{and}\qquad
\theta_2=\inf\{t>\theta_1;\; \rho(X_t)>2\epsilon \quad\text{or}\quad\rho(X_t)=0\}.
$$
For each finite $T>0$,
$$
\EE\Bigl[\int_{\theta_1\wedge T}^{\theta_2\wedge T}\textbf{1}_D(X_s)\; ds \Bigr] = 0.
$$
since either $\theta_1>T$ in which case $\theta_1\wedge T=\theta_2\wedge T=T$, or $\theta_1\le T$, in which case $\theta_1\wedge T=\theta_1$ and $\textbf{1}_D(X_s)=0$ for all $s\in[\theta_1,\theta_2)$. As a result,
\begin{equation}
    \label{fo:zero}
\EE\Bigl[\|X_{\theta_1\wedge T}-X_{\theta_2\wedge T}\|^2\Bigr]=0.
\end{equation}
On the event $\{\sup_{t\ge 0}\rho(X_t)>2\epsilon\}$ we have
$$
\lim_{T\nearrow\infty}X_{\theta_1\wedge T}= X_{\theta_1}
\qquad\text{and}\qquad
\lim_{T\nearrow\infty}X_{\theta_2\wedge T}= X_{\theta_2}
$$
because both $\theta_1$ and $\theta_2$ are finite on this event. So by Lebesgue's dominated convergence theorem, we have
$$
\EE\Bigl[\textbf{1}_{\{\sup_{t\ge 0}\rho(X_t)>2\epsilon\}}\|X_{\theta_1}-X_{\theta_2}\|^2\Bigr]=\lim_{T\nearrow\infty}\EE\Bigl[\textbf{1}_{\{\sup_{t\ge 0}\rho(X_t)>2\epsilon\}}\|X_{\theta_1\wedge T}-X_{\theta_2\wedge T}\|^2\Bigr]
$$
and this limit is $0$ because of \eqref{fo:zero}. Since $\rho(X_{\theta_1})=\epsilon$ and  $\rho(X_{\theta_2})=2\epsilon$ or $\rho(X_{\theta_2})=0$ on the event $\{\sup_{t\ge 0}\rho(X_t)>2\epsilon\}$, $\|X_{\theta_1}-X_{\theta_2}\|^2>0$ on that event, and we conclude that 
$$
\PP\Bigl[\sup_{t\ge 0}\rho(X_t)\le 2\epsilon\Bigr]=1.
$$
Since $\epsilon>0$ was arbitrary, we conclude that almost surely, $X_t\in \bar D$ for all $t\ge 0$.
Burkh\"older-Davis-Gundy inequality implies that $A_\infty<\infty$ a.s. and since $M=Z\circ A$, this implies that $M_t$ and our weak solution $X_t$ stop moving at $t=A_\infty$. As a result $X_{A_\infty}\notin  D$.
In fact, we claim that $A_\infty=\tau(X)$ the first exit time from $D$. Indeed, if $A_\infty>\tau(X)$, $X_t$ has to bounce back into $D$ instantaneously after $t=\tau(X)$ because, not only $X_t$ cannot go into $\bar D^c$, but it cannot stay on $\partial D$ for a non-empty time interval because the intervals of consistency of $X_t$ and $A_t$ have to be the same. This last fact implies that the time change $s\mapsto \tau_s$ defined in \eqref{fo:time_change} is continuous. Since
$$
\{A_\infty>\tau(X)\}\subset\{\omega\in \Omega;\;\omega_0=0,\;\exists t_0,\;\exists \epsilon>0, \;x_0+\omega_{t_0}\in\partial D,\;\text{and}\; x_0+\omega_t\in D\;\text{for}\;t\in(t_0-\epsilon,t_0)\cup(t_0,t_0+\epsilon)\},
$$
and since $\partial D$ is smooth, the Wiener measure of the set of paths in the above right hand side is $0$, the fact that $Z=M\circ\tau$ is a $\tilde{\cG}$-Brownian motion, implies that $\PP[A_\infty>\tau(X)]=0$. This identifies completely the law of $\bX$ and completes the proof of the uniqueness in law.
\end{proof}

We now state and prove the desired version of the mimicking theorem.

\begin{proposition}
    \label{pr:mimicking}
Let $x_0\in D$ and let us assume that $\bX=(X_t)_{0\le t\le T}$ is an It\^o process of the form 
\begin{equation}
    \label{fo:Xoft}
X_t=x_0+\int_0^t\alpha_s\;ds+W_t
\end{equation}
where $\bW=(W_t)_{0\le t\le T}$ is a Wiener process and $\balpha=(\alpha_t)_{0\le t\le T}$ is a bounded progressively measurable process. Then there exists a (deterministic) bounded measurable function $\tilde\alpha:[0,T]\times\RR^d\mapsto \RR^d$ and a strong solution  $\tilde{\bX}=(\tilde X_t)_{0\le t\le T}$ of the SDE
\begin{equation}
    \label{fo:Xtildeoft}
\tilde X_t=x_0+\int_0^t\tilde\alpha(s,\tilde X_s)\;ds+\tilde W_t
\end{equation}
such that $\cL(X_t\,|\,\tau(X)>t)=\cL(\tilde X_t\,|\,\tau(\tilde   X)>t)$ for all $t\in[0,T]$. In fact we may choose:
\begin{equation}
    \label{fo:alphatilde}
\tilde\alpha(t,x)={\bf 1}_D(x)\EE[\alpha_t\;|\;X_{t\wedge \tau(X)}=x].
\end{equation}
\end{proposition}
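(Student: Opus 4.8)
The plan is to recognize the conditional marginals $\cL(X_t\mid\tau(X)>t)$ as renormalized one–dimensional marginals of the stopped process $Z_t:=X_{t\wedge\tau(X)}$, to mimic $Z$ by a Markov process via the classical mimicking theorem, to identify the resulting drift with \eqref{fo:alphatilde} through a conditioning argument, and finally to transfer the mimicking process into a solution of \eqref{fo:Xtildeoft} using Lemma~\ref{le:well_posedness}. For the first reduction, note that since $D$ is open and $\bX$ is continuous, $\{t\ge 0:X_t\in D^c\}$ is closed, so on $\{\tau(X)<\infty\}$ the exit time is attained and $X_{\tau(X)}\in\partial D$, while $X_t\in D$ for every $t<\tau(X)$. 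Hence $\{X_{t\wedge\tau(X)}\in D\}=\{t<\tau(X)\}$ exactly, and $\{X_t\in B,\ \tau(X)>t\}=\{Z_t\in B\cap D\}$ for every Borel set $B$; consequently, whenever $\PP(\tau(X)>t)>0$,
\begin{equation*}
\cL(X_t\mid\tau(X)>t)=\cL(Z_t)(\,\cdot\,\cap D)\,/\,\cL(Z_t)(D).
\end{equation*}
So it suffices to construct $\tilde\bX$ solving \eqref{fo:Xtildeoft} with $\cL(\tilde X_{t\wedge\tau(\tilde X)})=\cL(Z_t)$ for all $t$ (when $\PP(\tau(X)>t)=0$ the identity is vacuous by our conventions, and the construction will force $\PP(\tau(\tilde X)>t)=0$ as well). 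Finally, observe that $Z$ is itself an It\^o process, $dZ_t=\mathbf 1_{[0,\tau(X))}(t)\,\alpha_t\,dt+\mathbf 1_{[0,\tau(X))}(t)\,dW_t$, with integrable coefficients since $\balpha$ is bounded.

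Next I would mimic $Z$. Applying the Brunick--Shreve mimicking theorem \cite{BrunickShreve} (in the tradition of Gy\"ongy \cite{gyongy1986mimicking}) to the It\^o process $Z$ produces jointly measurable functions $b(t,x)$ and $a(t,x)$ with $b(t,Z_t)=\EE[\mathbf 1_{[0,\tau(X))}(t)\alpha_t\mid Z_t]$ and $a(t,Z_t)=\EE[\mathbf 1_{[0,\tau(X))}(t)I\mid Z_t]$ a.s., together with a weak solution $\hat\bX$ of $d\hat X_t=b(t,\hat X_t)\,dt+a(t,\hat X_t)^{1/2}\,d\hat W_t$ such that $\cL(\hat X_t)=\cL(Z_t)$ for all $t\le T$. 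Since $\{t<\tau(X)\}=\{Z_t\in D\}$ is $\sigma(Z_t)$–measurable, we may take
\begin{equation*}
b(t,x)=\mathbf 1_D(x)\,\EE[\alpha_t\mid X_{t\wedge\tau(X)}=x]=\tilde\alpha(t,x),\qquad a(t,x)=\mathbf 1_D(x)\,I,
\end{equation*}
where $\tilde\alpha$ is bounded measurable (conditional expectation contracts the supremum norm, and a jointly measurable version exists by the disintegration used in \cite{BrunickShreve}). Then $\hat\bX$ is a weak solution of \eqref{fo:sde} with the bounded measurable drift $\bar b(t,x):=\EE[\alpha_t\mid X_{t\wedge\tau(X)}=x]$, and by Lemma~\ref{le:well_posedness} this SDE is well posed, so the path-space law of $\hat\bX$ is uniquely determined.

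Finally I would return to \eqref{fo:Xtildeoft}. By Veretennikov's theorem \cite{veretennikov1980strong}, the bounded measurable drift $\tilde\alpha$ yields a pathwise unique strong solution $\tilde\bX$ of \eqref{fo:Xtildeoft}. On $[0,\tau(\tilde X))$ one has $\tilde X_t\in D$, hence there $\tilde\alpha(t,\tilde X_t)=\mathbf 1_D(\tilde X_t)\bar b(t,\tilde X_t)$ and $1=\mathbf 1_D(\tilde X_t)$, while for $t\ge\tau(\tilde X)$ the stopped path $\tilde X_{t\wedge\tau(\tilde X)}$ is frozen at $\tilde X_{\tau(\tilde X)}\in\partial D$ — exactly as a solution of \eqref{fo:sde} behaves, since the coefficients of \eqref{fo:sde} vanish on $\partial D$. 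Thus $\tilde X_{\cdot\wedge\tau(\tilde X)}$ is a weak solution of \eqref{fo:sde}, so by the uniqueness in law of Lemma~\ref{le:well_posedness} its law equals that of $\hat\bX$, giving $\cL(\tilde X_{t\wedge\tau(\tilde X)})=\cL(\hat X_t)=\cL(Z_t)=\cL(X_{t\wedge\tau(X)})$ for every $t\in[0,T]$. Together with the first paragraph this yields $\cL(X_t\mid\tau(X)>t)=\cL(\tilde X_t\mid\tau(\tilde X)>t)$ for all $t$, which is the assertion, with the explicit form \eqref{fo:alphatilde} already built in.

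The hard part is the middle steps: one must check that the mimicking theorem genuinely applies to the \emph{degenerate}, stopped It\^o process $Z$ and that its mimicking coefficients can be written in the multiplicative form of \eqref{fo:alphatilde} — which rests entirely on the identity $\{t<\tau(X)\}=\{X_{t\wedge\tau(X)}\in D\}$ — and then one must bridge the killed SDE \eqref{fo:sde} (solved by the mimicking process) with the constant–volatility SDE \eqref{fo:Xtildeoft} appearing in the statement. It is precisely this bridge that makes Lemma~\ref{le:well_posedness} indispensable, since without uniqueness in law for \eqref{fo:sde} one cannot identify the stopped solution of \eqref{fo:Xtildeoft} with the Brunick--Shreve mimicking process. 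An alternative route, closer to the classical proofs, is to apply It\^o's formula to test functions $\varphi\in C_c^\infty(D)$ — for which the killing is invisible since $\varphi$ vanishes near $\partial D$ — to show that $t\mapsto\cL(X_{t\wedge\tau(X)})$ solves the Fokker--Planck equation with drift $\tilde\alpha$, and then to invoke a superposition principle together with Lemma~\ref{le:well_posedness} to get uniqueness of that equation; the role of Lemma~\ref{le:well_posedness} is the same either way.
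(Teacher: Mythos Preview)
Your proposal is correct and follows essentially the same approach as the paper: stop the process, apply Brunick--Shreve to the stopped process, identify the mimicking coefficients with \eqref{fo:alphatilde}, invoke Lemma~\ref{le:well_posedness} for well-posedness of the killed SDE, and then bridge to the constant-volatility SDE \eqref{fo:Xtildeoft}. The only minor variation is in that last bridge: the paper uses a Stroock--Varadhan gluing argument to match $\tilde\bY$ and $\tilde\bX$ on $\cF_\tau$ (and separately proves the claim $\{\tilde Y_t\in D\}=\{\tau(\tilde Y)>t\}$ via pathwise uniqueness), whereas you simply observe that $\tilde X_{\cdot\wedge\tau(\tilde X)}$ is itself a weak solution of the killed SDE and invoke uniqueness in law directly --- a slightly cleaner way to reach the same conclusion.
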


\begin{proof}
    Let $Y_t=X_{t\wedge \tau(X)}$ and note that 
$$
dY_t={\bf 1}_{\{\tau(X)>t\}}dX_t={\bf 1}_{\{\tau(X)>t\}}\alpha_t\,dt+{\bf 1}_{\{\tau(X)>t\}} dW_t
$$
Note also that $\{\tau(X)>t\}=\{Y_t\in D\}$ a.s., i.e. the events differ at most by a null set. Indeed, the boundary $\partial D$ being smooth implies that $X_{\tau(X)}\in \bar D\setminus D$. As a result, $\bY=(Y_t)_{0\le t\le T}$ is a solution of the SDE:
\begin{equation}
    \label{fo:dYt}
dY_t={\bf 1}_{D}(Y_t)\alpha_t\,dt+{\bf 1}_{D}(Y_t) dW_t
\end{equation}
with measurable bounded coefficients, to which we can apply the standard mimicking theorem. See \cite{BrunickShreve}. Notice that if $\tilde \alpha$ is defined by \eqref{fo:alphatilde} we have:
$$
\EE[{\bf 1}_D(Y_t)\alpha_t\;|\,Y_t=y]={\bf 1}_D(y)\EE[\alpha_t\;|\;X_{t\wedge\tau(X)}=y]=\tilde\alpha(t,y).
$$
and the classical mimicking theorem gives the existence of a weak solution $\tilde{\bY}=(\tilde Y_t)_{0\le t\le T}$ of the SDE
$$
d\tilde Y_t=\tilde\alpha(t,\tilde Y_t)dt+{\bf 1}_D(\tilde Y_t)d\tilde W_t
$$
such that $\cL(\tilde Y_t)=\cL(Y_t)$ for all $t\in[0,T]$. 
Note by Lemma \ref{le:well_posedness} that $\tilde Y$ is in fact a strong solution.
We claim that for every $t>0$,
\begin{equation}
    \label{fo:claim}
\{\tilde Y_t\in D\}=\{\tau(\tilde Y)>t\}\qquad \tilde \PP-a.s.
\end{equation}
To see this, note that $\tilde Y'_t = \tilde Y_{t \wedge \tilde(Y)}$ defines another strong solution of the same SDE because $\tilde \alpha(t,\cdot)=0$ on $D^c$. By pathwise uniqueness (Lemma \ref{le:well_posedness}) we must have $\tilde Y' \equiv \tilde Y$, which means that $\tilde Y$ stops moving as soon as it hits $\partial D$. The claim \eqref{fo:claim} follows.

Now, since $\tilde\alpha$ is bounded, the SDE
$$
d\tilde X_t = \tilde\alpha(t,\tilde X_t)dt+d\tilde W_t,\qquad \tilde X_0=x_0
$$
has a unique weak solution, and since \emph{gluing} $(\tilde Y_t)_{0\le t\le \tau(\tilde Y)}$ to $(\tilde X_t)_{\tau(\tilde X)\le t\le T}$ gives another solution of the martingale problem for $(I_d,\tilde\alpha)$, see for example the proof of \cite[Lemma 7.2.3]{StroockVaradhan} and \cite[Theorem 6.6.1]{StroockVaradhan}, we conclude by uniqueness of this martingale problem that the restrictions to $\cF_\tau$ of the laws of $\tilde{\bY}$ and $\tilde{\bX}$ coincide. In particular, for each $t\in [0,T]$, we have
\begin{equation}
    \label{fo:den}
\tilde{\PP}[\tau(\tilde X)>t]=\tilde{\PP}[\tau(\tilde Y)>t]=\tilde{\PP}[\tilde Y_t\in D]=\PP[Y_t\in D]=\PP[\tau(X)>t]
\end{equation}
where we used \eqref{fo:claim} and the mimicking property $\cL(Y_t)=\cL(\tilde Y_t)$.
More generally, for each bounded measurable function $\varphi:\RR^d\mapsto\RR$ we also have:
\begin{equation}
    \label{fo:num}
\begin{split}
\tilde{\EE}[\varphi(\tilde X_t){\bf 1}_{\{\tau(\tilde X)>t\}}]
&= \tilde{\EE}[\varphi(\tilde X_{t\wedge \tau(\tilde X)}){\bf 1}_{\{\tau(\tilde X)>t\}}]\\
&= \tilde{\EE}[\varphi(\tilde Y_{t\wedge \tau(\tilde Y)}){\bf 1}_{\{\tau(\tilde Y)>t\}}]\\
&= \tilde{\EE}[\varphi(\tilde Y_{t}){\bf 1}_{D}(\tilde Y_t)]\\
&= \EE[\varphi(Y_{t}){\bf 1}_{D}(Y_t)]\\
&=\EE[\varphi(X_t){\bf 1}_{\{\tau(X)>t\}}].
\end{split}
\end{equation}
Again, we used \eqref{fo:claim} and $\cL(Y_t)=\cL(\tilde Y_t)$.
Putting \eqref{fo:num} and \eqref{fo:den} gives the desired result.
\end{proof}

\begin{remark}
The proof of Proposition 3.6 in \cite{campi2021n} contains an argument  similar to our Proposition \ref{pr:mimicking}, though there is a subtle gap in their argument. Specifically, they appear to take the identity \eqref{fo:claim} for granted, and its careful justification is where we need to invoke Lemma \ref{le:well_posedness}.
\end{remark}

\section{\textbf{Proof of Theorem \ref{th:main}}}  \label{se:mainproof}

We next prove Theorem \ref{th:main}. Note first that any Markovian control trivially induces an open-loop control in $\OL$, by setting $\alpha_t(\omega) = \phi(t,X_t(\omega))$. Hence,
 \begin{equation*}
     \inf_{\balpha \in \OL}J(\balpha) \le \inf_{\phi \in \AM}J(\phi).
 \end{equation*}
We prove Theorem \ref{th:main} by establishing separately the following two claims:
\begin{align}  
 \inf_{\phi \in \AM}J(\bphi)  \le \inf_{\balpha \in \OLb}J(\balpha) \label{fo:equality} \\
\inf_{\balpha \in \OLb}J^\tau(\balpha) \le \inf_{\balpha \in \OL}J^\tau(\balpha). \label{fo:equality2}
\end{align}
From this it will follow that the three infima above are all equal.

\subsection*{Proof of \eqref{fo:equality}} 
Fix $\balpha=(\alpha_t)_{t\ge 0} \in \OL_b$, let us denote by $\bX=(X_t)_{0\le t\le T}$ the associated controlled process $\bX^\alpha$ over the interval $[0,T]$.
\begin{equation}
\begin{split}
J(\balpha)
&=\int_0^T
\frac{\EE^{\PP}\Bigl[f(X_{t\wedge \tau(X)},\alpha_t)\textbf{1}_{\{\tau(X)>t\}}\Bigr]}{\EE^{\PP}[\tau(X)>t]}dt +\EE^\PP\Bigl[g(X_T)\,|\,\tau(X)>T\Bigr] \\
&=\int_0^T
\frac{\EE^\PP\Bigl[\EE^\PP[f(X_{t\wedge\tau(X)},\alpha_t)|X_{t\wedge\tau(X)}]\textbf{1}_{\{\tau(X)>t\}}\Bigr]}{\EE^{\PP}[\tau(X)>t]}dt
+\EE^\PP\Bigl[g(X_T)\,|\,\tau(X)>T\Bigr].
\end{split}
\end{equation}
where we used the fact that $\textbf{1}_{\{\tau(X)>t\}}$ is measurable with respect to $X_{t\wedge\tau(X)}$ since $\{\tau(X)>t\}=\{X_{t\wedge\tau(X)}\in D\}$. By convexity of $a\mapsto f(x,a)$ and Jensen's inequality,
\begin{equation}
\begin{split}
J(\balpha)
&\ge\int_0^T
\frac{\EE^\PP\Bigl[f\Bigl(X_{t\wedge\tau(X)},\EE^\PP[\alpha_t|X_{t\wedge\tau(X)}]\Bigr)\textbf{1}_{\{\tau(X)>t\}}\Bigr]}{\PP[\tau(X)>t]}dt
+\EE^\PP\Bigl[g(X_T)\,|\,\tau(X)>T\Bigr].
\end{split}
\end{equation}
Letting $\tilde X_t$ be the process constructed in Proposition \ref{pr:mimicking}, with $\tilde \alpha$ the function defined therein, we have
\begin{equation}
\begin{split}
J(\balpha)
&\ge\int_0^T
\frac{\EE^\PP\Bigl[f\Bigl(X_{t\wedge\tau(X)},\tilde\alpha(t,X_{t\wedge\tau(X)})\Bigr)\textbf{1}_{\{\tau(X)>t\}}\Bigr]}{\PP[\tau(X)>t]}dt
+\EE^\PP\Bigl[g(X_T)\,|\,\tau(X)>T\Bigr]\\
&=\int_0^T
\frac{\EE^\PP\Bigl[f\Bigl(X_t,\tilde\alpha(t,X_t)\Bigr)\textbf{1}_{\{\tau(X)>t\}})\Bigr]}{\PP[\tau(X)>t]}dt
+\EE^\PP\Bigl[g(X_T)\,|\,\tau(X)>T\Bigr]\\
&=\int_0^T
\EE^\PP\Bigl[f(X_t,\tilde\alpha(t,X_t))\,|\,\tau(X)>t\Bigr]dt
+\EE^\PP\Bigl[g(X_T)\,|\,\tau(X)>T\Bigr]\\
&=\int_0^T
\EE^{\tilde \PP}\Bigl[f(\tilde X_t,\tilde\alpha(t,\tilde  X_t))\,|\,\tau(\tilde X)>t\Bigr]dt
+\EE^\PP\Bigl[g(\tilde X_T)\,|\,\tau(\tilde X)>T\Bigr]\\
\end{split}
\end{equation}
where the last step used the equality of laws given by Proposition \ref{pr:mimicking}. Noting that $\tilde \alpha \in \AM$, this proves \eqref{fo:equality}.

\subsection*{Proof of \eqref{fo:equality2}}

Fix $\balpha \in \OL$. For each $n \in \mathbb{N}$ define $\balpha^n=(\alpha^n_t)_{0\le t\le T} \in \OL_b$ by setting $\alpha^n_t$ to be the projection of $\alpha_t$ onto the centered ball of radius $n$ in $\RR^d$. Let $\bX^n$ be the corresponding state process. We claim that $J(\balpha^n) \to J(\balpha)$, which will complete the proof.
 To prove this, we let
 \begin{equation*}
     Y_t = X_0 + \int_0^t\alpha_s\,ds + W_t, \qquad Y_t^n = X_0 + \int_0^t\alpha_s^n\,ds + W_t, \quad t \in [0,T].
 \end{equation*}
 Note that $\tau(X)=\tau(Y) =: \tau$ and $X_t=Y_{t \wedge \tau}$, and similarly  $\tau(X^n)=\tau(Y^n) =: \tau^n$ and $X_t^n=Y^n_{t \wedge \tau^n}$. We clearly have
 \begin{equation*}
    \EE\big[\sup_{t \in [0,T]} |Y^n_t-Y_t|^2\big] \le T\EE\int_0^T|\alpha^n_t - \alpha_t|^2\,dt \to 0.
 \end{equation*}
By square-integrability of $\balpha$, it is well known \cite[Theorem 7.2]{liptser2013statistics} that the law of the process $\bY-X_0$ is absolutely continuous with respect to the Wiener measure, in notation $\mathcal{L}(\bY_\cdot-X_0) \ll \mathcal{L}(\bW_\cdot)$. By smoothness of the boundary of $D$,  Brownian motion cannot ``graze" $\partial D$. That is, with probability 1, there do not exist $t \in (0,T)$ and $\epsilon > 0$ such that $W_t \in \partial D$ and $W_s \in \overline{D}$ for $s \in (t ,t+\epsilon)$. Hence, $\bY$ satisfies the same property. From Lemma \ref{le:grazing} below, we deduce that $\tau^n = \tau(Y^n)\to \tau(Y)=\tau$ a.s. It follows that
\begin{align*}
\EE\Big[ \sup_{t \in [0,T]}|X^n_t-X_t|^2\Big] &= \EE\Big[ \sup_{t \in [0,T]}|Y^n_{t\wedge \tau^n}-Y_{t\wedge \tau}|^2\Big] \\
    &\le 2\EE\Big[ \sup_{t \in [0,T]}|Y^n_t-Y_t|^2\Big] + 2\EE\Big[ \sup_{t \in [0,T]}|Y_{t\wedge \tau^n}-Y_{t\wedge \tau}|^2\Big] \to 0.
\end{align*}
Using the quadratic growth assumption \eqref{asmp:growth},  we readily deduce that $J(\balpha^n) \to J(\balpha)$.

\begin{lemma} 
\label{le:grazing}
Let $x^n, x \in C([0,\infty);\RR^d)$ with $x^n \to x$ uniformly on compacts, and with $x_0,x^n_0 \in D$. Suppose that there do not exist $t >0$ and $\epsilon > 0$ such that $x_t \in \partial D$ and $x_s \in \overline{D}$ for $s \in (t,t+\epsilon)$. Then $\tau(x^n) \to \tau(x)$. 
\end{lemma}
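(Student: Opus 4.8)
The plan is to prove separately that $\limsup_n \tau(x^n) \le \tau(x)$ and $\liminf_n \tau(x^n) \ge \tau(x)$; write $\tau = \tau(x)$ throughout. For the upper bound, suppose first $\tau < \infty$. By definition of the first exit time and the no-grazing hypothesis applied just past $\tau$, for every small $\delta > 0$ there exists $s \in (\tau, \tau + \delta)$ with $x_s \notin \overline{D}$ (if no such $s$ existed for some $\delta$, the path would stay in $\overline D$ on $(\tau,\tau+\delta)$ with $x_\tau\in\partial D$, contradicting the hypothesis — except we must first check $x_\tau\in\partial D$, which holds by continuity since $x$ exits at $\tau$ and $D$ is open). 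Since $x^n_s \to x_s$ and $\overline{D}^c$ is open, $x^n_s \notin \overline{D} \subseteq D$ is false, i.e.\ $x^n_s \notin D$, for all $n$ large; hence $\tau(x^n) \le s < \tau + \delta$ eventually, giving $\limsup_n \tau(x^n) \le \tau + \delta$, and then $\le \tau$ as $\delta \downarrow 0$. If $\tau = \infty$ the upper bound is vacuous.

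For the lower bound, fix any $T_0 < \tau$ (so $T_0 < \infty$); I want $\tau(x^n) > T_0$ for all large $n$. On the compact interval $[0, T_0]$ we have $x_t \in D$ for all $t$ (by definition of $\tau$, noting $x$ does not exit before $\tau$), so $K := \{x_t : t \in [0,T_0]\}$ is a compact subset of the open set $D$, hence $d(K, \partial D) =: \eta > 0$. By uniform convergence on $[0,T_0]$, $\sup_{t \le T_0} |x^n_t - x_t| < \eta$ for $n$ large, which forces $x^n_t \in D$ for all $t \in [0, T_0]$, i.e.\ $\tau(x^n) > T_0$. Letting $T_0 \uparrow \tau$ gives $\liminf_n \tau(x^n) \ge \tau$. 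Combining the two bounds yields $\tau(x^n) \to \tau(x)$.

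The only delicate point is the upper bound argument: one must be careful that the exit is "genuine" — that is, that immediately after $\tau$ the path $x$ actually leaves $\overline D$ rather than merely touching $\partial D$ and returning. This is precisely what the no-grazing hypothesis rules out (applied at the time $\tau$ itself), so the hypothesis is used exactly once and exactly there; without it the lemma is false (consider a path tangent to $\partial D$ at one instant, approximated by paths that cross). A minor subtlety is handling $\tau = \infty$ and ensuring $x_\tau \in \partial D$ when $\tau < \infty$, both of which follow from continuity of $x$ and openness of $D$. Everything else is a routine compactness-plus-uniform-convergence argument, and no probability enters — the lemma is purely deterministic, which is why it can be quoted cleanly in the proof of \eqref{fo:equality2} after the a.s.\ no-grazing property has been established for $\bY$.
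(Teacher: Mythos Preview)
Your proof is correct and follows essentially the same two-step approach as the paper: the lower bound via uniform convergence on compact subintervals of $[0,\tau(x))$ (you spell out the compact-image/positive-distance argument that the paper leaves implicit), and the upper bound by using the no-grazing hypothesis at $t=\tau(x)$ to find $s$ just past $\tau(x)$ with $x_s\in\overline{D}^c$, then exploiting openness of $\overline{D}^c$. The only cosmetic issue is the garbled phrase ``$x^n_s \notin \overline{D} \subseteq D$ is false''; you mean simply that $x^n_s \notin \overline{D}$ for large $n$, hence $x^n_s \notin D$ since $D\subseteq\overline{D}$.
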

 
\begin{proof}
First suppose $t < \tau(x)$. Then $x_s \in D$ for all $s \in [0,t]$. By uniform convergence and openness of $D$, for sufficiently large $n$ we have $x^n_s \in D$ for all $s \in [0,t]$, and thus $\tau(x^n) > t$. Hence, $\liminf_n \tau(x^n) \ge t$. This holds for every $t < \tau(x)$, which shows that $\liminf_n \tau(x^n) \ge \tau(x)$.

 Next, suppose $t > \tau(x)$. Let $\epsilon > 0$. By assumption, $\tau(x)$ must be an accumulation point of the set $\{s > \tau(x) : x_s \notin \overline{D}\}$, so we may find $s \in (\tau(x), \tau(x) +\epsilon)$ such that $x_s \notin \overline{D}$. Since $\overline{D}^c$ is open, we have for sufficiently large $n$ that $x^n_s \notin \overline{D}^c$, which implies $\tau(x^n) \le s < \tau(x) + \epsilon$. We deduce that $\limsup_n \tau(x^n) \le \tau(x)$.
 \end{proof}

\bibliographystyle{apalike}

\end{document}